\def\vint_#1{\mathchoice%
          {\mathop{\kern 0.2em\vrule width 0.6em height 0.69678ex depth -0.58065ex
                  \kern -0.8em \intop}\nolimits_{\kern -0.4em#1}}%
          {\mathop{\kern 0.1em\vrule width 0.5em height 0.69678ex depth -0.60387ex
                  \kern -0.6em \intop}\nolimits_{#1}}%
          {\mathop{\kern 0.1em\vrule width 0.5em height 0.69678ex
              depth -0.60387ex
                  \kern -0.6em \intop}\nolimits_{#1}}%
          {\mathop{\kern 0.1em\vrule width 0.5em height 0.69678ex depth -0.60387ex
                  \kern -0.6em \intop}\nolimits_{#1}}}
\def\vintslides_#1{\mathchoice%
          {\mathop{\kern 0.1em\vrule width 0.5em height 0.697ex depth -0.581ex
                  \kern -0.6em \intop}\nolimits_{\kern -0.4em#1}}%
          {\mathop{\kern 0.1em\vrule width 0.3em height 0.697ex depth -0.604ex
                  \kern -0.4em \intop}\nolimits_{#1}}%
          {\mathop{\kern 0.1em\vrule width 0.3em height 0.697ex de pth -0.604ex
                  \kern -0.4em \intop}\nolimits_{#1}}%
          {\mathop{\kern 0.1em\vrule width 0.3em height 0.697ex depth -0.604ex
                  \kern -0.4em \intop}\nolimits_{#1}}}
\numberwithin{equation}{section}
\newtheorem{theorem}{Theorem}[section]
\newtheorem{lemma}[theorem]{Lemma}
\newtheorem{corollary}[theorem]{Corollary}
\theoremstyle{definition}
\theoremstyle{remark}
\newtheorem{rem}[theorem]{Remark}
\newcommand{\N}{\mathbb{N}}
\newcommand{\diam}{\mathrm{diam}}
\begin{document}

\title[Restricting open surjections]
{Restricting open surjections}

\author{Jesus A. Jaramillo}
\address{Instituto de Matem\'atica Interdiscliplinar (IMI) and Departamento de An\'alisis Matem\'atico\\
Universidad Complutense de Madrid\\
28040-Madrid\\
Spain}
\email{jaramil@mat.ucm.es}

\author{Enrico Le Donne}
\author{Tapio Rajala}

\address{University of Jyvasklya\\
         Department of Mathematics and Statistics \\
         P.O. Box 35 (MaD) \\
         FI-40014 University of Jyvaskyla \\
         Finland}
\email{ledonne@msri.org}
\email{tapio.m.rajala@jyu.fi}

\thanks{The research of Jaramillo is supported in part by MINECO grant MTM2015-65825-P (Spain). Le Donne acknowledges the support of the Academy of Finland, project no. 288501, and the European Research Council, ERC-StG grant GeoMeG. Rajala acknowledges the support of the Academy of Finland, project  no. 274372.}
\subjclass[2000]{Primary 54E40, 54C65.}
\keywords{}
\date{\today}

\begin{abstract}
We show that any continuous open surjection from a complete metric space to another metric space can be restricted to a surjection for which the domain has the same density character as the target. This improves a recent result of Aron, Jaramillo and Le Donne.
\end{abstract}

\maketitle

\section{Introduction}

In recent years, there has been an increasing interest in studying {\it surjectivity properties} 
of different classes of maps between Banach spaces or, more generally, metric spaces. For
a metric space $X$ we denote by ${\rm card}(X)$
the cardinality of $X$ and by ${\rm dens}(X)$
the density character, which is defined as the smallest cardinality of a dense subset of $X$.

In the Banach-space setting, it was proved in \cite{Ba} that every separable Banach 
space $Y$ is the range of a $C^1$-smooth surjection $f:X \to Y$ from any infinite-dimensional separable Banach space $X$. In addition, some conditions are given in \cite{Ba} 
under which $f$ can be chosen to be $C^\infty$-smooth. In \cite{Ha}, 
it was shown that if $X=c_0$ and $Y=\ell_2$, then $f$ cannot be $C^2$-smooth.
For the non-separable space $X=c_0(\omega_1)$,  the results of \cite{GHM} show that 
the existence of $C^2$-smooth surjections onto $\ell_2$ depends on additional axioms 
of set theory.

The problem of surjectivity of separable restrictions has been considered
in \cite{AJR}. It is proved there that, for every $X$ belonging to some class of Banach spaces  which includes  
all  $C^\infty$-smooth spaces with density $\geq 2^{\aleph_0}$, as well as all spaces
$\ell_p(\Gamma)$ with  ${\rm card} (\Gamma) \geq 2^{\aleph_0}$, and for every Banach space
$Y$ with dimension $\geq 2$, there exists a $C^\infty$ smooth surjection $f: X \to Y$ whose 
restriction to any separable subspace of $X$ fails to be surjective.  More recently, this result
has been extended in \cite{HJ}, where it is shown to hold for every nonseparable 
super-reflexive space $X$.

In the  metric setting, positive results about the surjectivity of separable restrictions
have been obtained in \cite{ALJ2017}. A map $f \colon X \to Y$ between metric spaces is called
{\it density-surjective} if there is a subset $Z \subset X$ so that ${\rm dens}(Z) = {\rm dens}
(Y)$ and $f|_Z \colon Z \to Y$ is surjective. It is shown in \cite{ALJ2017}  that every 
{\it uniformly open} continuous surjection from a complete metric space to another metric space 
is density-surjective. This result has been refined very recently in \cite{KR2017}, where it is proved that the corresponding surjective restriction of a uniformly open surjection can be also chosen to be  uniformly open.

In this short note we improve on the mentioned result from \cite{ALJ2017} by replacing the uniformly openess assumption by openess. 

\begin{theorem}\label{thm:main}
Let $X$ and $Y$ be metric spaces with $X$ complete.
Let $f \colon X \to Y$ be a continuous open surjection. Then $f$ is density-surjective.
\end{theorem}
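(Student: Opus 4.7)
The plan is to build a set $W\subset X$ with $|W|\le\kappa:=\mathrm{dens}(Y)$ that is rich enough that for every $y\in Y$ some Cauchy sequence $(w_n)\subset W$ has $f(w_n)\to y$. Completeness of $X$ then produces a limit $x\in\overline{W}$, continuity of $f$ forces $f(x)=y$, and $Z:=\overline{W}$ satisfies $\mathrm{dens}(Z)\le\kappa$ together with $f(Z)=Y$, which is exactly density-surjectivity.

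The construction of $W$ is by $\omega$-iteration. Let $W_0$ consist of one preimage of each point of a fixed dense $D\subset Y$ with $|D|=\kappa$. Given $W_n$, openness at each $w\in W_n$ supplies, for every integer $k\ge 1$, a modulus $\delta(w,k)>0$ with $f(B(w,1/k))\supset B(f(w),\delta(w,k))$. Choose a dense $E(w,k)\subset B(f(w),\delta(w,k))\cap Y$ of size at most $\kappa$, select for each $y'\in E(w,k)$ a preimage $x(y',w,k)\in B(w,1/k)$, and let $W_{n+1}$ be $W_n$ together with all such preimages. Put $W:=\bigcup_n W_n$; routine cardinal arithmetic yields $|W|\le\kappa$. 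By design, for every $w\in W$ and every $k\ge 1$ the set $W\cap B(w,1/k)$ contains preimages of the $E(w,k)$-dense subset of $B(f(w),\delta(w,k))$.

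With $y\in Y$ fixed, one now tries to produce $(w_n)\subset W$ satisfying $d(w_n,w_{n+1})<2^{-n}$ (so summability gives the Cauchy property) and $d(f(w_n),y)\to 0$. The induction from $w_n$ to $w_{n+1}$ is tractable whenever $d(f(w_n),y)<\delta(w_n,2^{-n})$: in that regime $y$ lies in the image-dense ball around $f(w_n)$, so a suitable $w_{n+1}\in W\cap B(w_n,2^{-n})$ can be chosen sending $f(w_{n+1})$ arbitrarily close to $y$.

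The main difficulty is precisely that $\delta(w,r)$ depends on $w$, not only on $r$ -- this is the whole gap with the uniformly open setting of \cite{ALJ2017}. Because $\delta(w_{n+1},2^{-(n+1)})$ is only revealed after $w_{n+1}$ is chosen, it is not a priori clear that one can guarantee $d(f(w_{n+1}),y)<\delta(w_{n+1},2^{-(n+1)})$ for the next step; the same issue affects the initial step, since $\bigcup_{w\in W}B(f(w),\delta(w,1))$ is only a dense open subset of $Y$, not all of $Y$. To overcome this I would appeal to completeness of $X$ via a Baire-category argument on the sets $\{x\in X:\delta(x,r)\ge 1/m\}$, which exhaust $X$ for each $r>0$, and iterate the enrichment so that at every scale a supply of candidate points with a uniformly controlled modulus is available. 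Once each inductive step is underwritten by such a controlled choice, the Cauchy sequence can be produced for every $y\in Y$ and the theorem follows.
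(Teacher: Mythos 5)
You have correctly located the crux --- that the openness modulus $\delta(w,r)$ depends on $w$, so neither the first step nor the inductive step of your Cauchy-sequence construction goes through --- but your proposal does not resolve it, and the repair you sketch is not the one that works. As written, nothing in the construction of $W$ guarantees $y\in B(f(w_{n+1}),\delta(w_{n+1},2^{-(n+1)}))$, so the induction stalls; and already $\bigcup_{w\in W_0}B(f(w),\delta(w,1))$ is only a dense open subset of $Y$. A Baire-category argument on the sets $E_{r,m}=\{x\in X:\delta(x,r)\ge 1/m\}$ cannot close this gap: these sets need not be closed, $Y$ is not assumed complete, and at best category would give you a single $m$ for which $E_{r,m}$ is somewhere dense, which controls nothing at an arbitrary $y\in Y$. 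In the correct proof, completeness of $X$ is used only once, at the very end, to produce the limit of the Cauchy sequence; it plays no role in taming the modulus.

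The mechanism that actually works is elementary and is the paper's key lemma. For fixed $r$ the sets $E_{r,m}$ cover $X$, hence the images $f(E_{r,m})$ cover $Y$. In each $f(E_{r,m})$ choose a dense set of cardinality at most $\kappa$ together with preimages lying in $E_{r,m}$, and surround each chosen image point by a ball of radius $1/m$, i.e.\ of radius matched to the \emph{uniform} modulus valid on that stratum. These balls cover all of $f(E_{r,m})$ (not merely a dense subset), and each is contained in $f(B(x',r))$ for its designated preimage $x'$; the union over the countably many $m$ is an open cover $\{Y_k\}$ of $Y$ of cardinality at most $\kappa$ with $\mathrm{diam}(Y_k)\le r$ and $Y_k\subset f(B(x_k,r))$. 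The second missing ingredient is how to make the induction self-sustaining: one iterates this covering lemma on the restricted maps $f\colon f^{-1}(Y_k)\cap B(x_k,r)\to Y_k$, which remain open surjections (restriction of an open map to an open set, surjective by the covering property). This yields a nested system of covers at scales $2^{-i}$ whose designated points, along the chain of pieces containing a given $y$, automatically form a Cauchy sequence with images converging to $y$; completeness and continuity then finish the argument exactly as you describe. So the overall shape of your plan (a $\kappa$-sized skeleton, a Cauchy sequence for each $y$, then take the closure) matches the paper, but the decisive step --- the stratification by modulus combined with matching ball radii and the passage to restricted open surjections --- is absent.
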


We will actually prove a slightly more general version of Theorem \ref{thm:main}. This is stated in Theorem \ref{thm:main:extended}. 

Recall that, if $f: M \to N$ is a $C^1$-smooth map between Banach manifolds, a point $x\in M$ is said to be a {\it regular point} of $f$ if the derivative $d f(x): T_x M \to T_{f(x)} N$ is onto. A point $y\in N$ is said to be a {\it regular value} of $f$ if every $x\in f^{-1}(y)$ is a regular point. Also, note that every {\it paracompact} Banach manifold admits a complete metric (see e.g., 
\cite[Corollary in page 2]{Pal}). Now as a corollary of Theorem \ref{thm:main:extended} and the open mapping theorem for Banach manifolds (see e.g.,  in \cite[Theorem 3.5.2]{AMR} we obtain the following consequence.




\begin{corollary}	
Let $f : M \to N$ be a $C^1$-smooth surjection between paracompact Banach manifolds, and suppose that the set of critical values of $f$ is countable. Then $f$ is density-surjective.
\end{corollary}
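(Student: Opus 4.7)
The plan is to partition $N$ into the set $V$ of regular values and the countable set $C$ of critical values, to apply Theorem~\ref{thm:main:extended} to the restriction $f\colon f^{-1}(V) \to V$, and then to add one preimage per critical value.

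First I would check that $f\colon f^{-1}(V) \to V$ is a continuous open surjection whose domain admits a complete metric. Openness follows from the open mapping theorem for Banach manifolds: every point of $V$ is a regular value, so every point of $f^{-1}(V)$ is a regular point, and the cited theorem provides, for each such point $x$, an open neighborhood $W \subset M$ on which $f$ is open with $f(W)$ open in $N$; intersecting $W$ with $f^{-1}(V)$ (respectively $f(W)$ with $V$) shows that $f|_{f^{-1}(V)}$ is open as a map into $V$. For complete metrizability, countability of $C$ makes it an $F_\sigma$ subset of $N$, so $V$ is $G_\delta$ in $N$ and $f^{-1}(V)$ is $G_\delta$ in $M$; since a paracompact Banach manifold admits a complete metric, Alexandrov's theorem yields a complete metric on $f^{-1}(V)$ compatible with its topology.

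Applying Theorem~\ref{thm:main:extended} (equivalently, Theorem~\ref{thm:main} with this complete metric) to $f|_{f^{-1}(V)}$ produces a set $Z \subset f^{-1}(V)$ with $f(Z) = V$ and ${\rm dens}(Z) = {\rm dens}(V)$. For each $c \in C$ fix $x_c \in f^{-1}(c)$, using surjectivity of $f$, and set $Z' = Z \cup \{x_c : c \in C\}$. Then $f(Z') = V \cup C = N$ and
\[
{\rm dens}(Z') \le {\rm dens}(V) + \aleph_0.
\]

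It remains to verify ${\rm dens}(Z') = {\rm dens}(N)$. Continuity of $f$ together with $f(Z') = N$ forces ${\rm dens}(N) \le {\rm dens}(Z')$. Conversely, density equals topological weight for metric spaces and weight is monotone under passing to subspaces, so ${\rm dens}(V) \le {\rm dens}(N)$; combined with the displayed bound and the observation that ${\rm dens}(N) \ge \aleph_0$ whenever $N$ is infinite (the finite case is immediate), this yields ${\rm dens}(Z') \le {\rm dens}(N)$. I expect the main obstacle to be the bookkeeping required to certify that $f|_{f^{-1}(V)}$ is open into $V$ (and not merely into $N$), which I would handle by reading off the local normal form at regular points supplied by the Banach-manifold open mapping theorem.
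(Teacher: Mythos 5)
Your proposal is correct and follows essentially the route the paper intends: take $Y'$ to be the set of regular values, use the Banach-manifold open mapping theorem to see that $f|_{f^{-1}(Y')}\colon f^{-1}(Y')\to Y'$ is open and surjective, apply Theorem~\ref{thm:main:extended}, and absorb the countably many critical values by adjoining one preimage each. The detour through Alexandrov's theorem to put a complete metric on $f^{-1}(V)$ is sound but unnecessary here, since the whole point of Theorem~\ref{thm:main:extended} (as opposed to Theorem~\ref{thm:main}) is that only the ambient space $X=M$ needs to be complete, not the restricted domain.
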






\section{Proof of the Theorem}

We will prove the following, slightly more general version of Theorem \ref{thm:main}

\begin{theorem}\label{thm:main:extended}
Let $X$ and $Y$ be metric spaces and $Y'\subset Y$ a subset.
Assume that $X$ is complete. 
Let $f \colon X \to Y$ be continuous such that for $X':= f^{-1}(Y')$  the map $f|_{X'} \colon X' \to Y'$ is open and surjective.
Then there exists a subspace $X_0 \subset X'$ such that $f|_{X_0} \colon X_0 \to Y'$ is surjective with ${\rm dens}(X_0)={\rm dens}(Y')$ and $X_0 $ is relatively closed in $ X'$.
\end{theorem}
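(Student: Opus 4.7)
Let $\kappa := \mathrm{dens}(Y')$. The plan is to construct a set $E \subset X'$ of cardinality at most $\kappa$ whose relative closure $X_0 := \overline{E}^{X'} = \overline{E}^X \cap X'$ surjects onto $Y'$ under $f$; such an $X_0$ is automatically relatively closed in $X'$ and has density at most $\kappa$. Any surjection $X_0 \to Y'$ forces $\mathrm{dens}(Y') \leq \mathrm{dens}(X_0)$ by continuity, so equality of densities will follow once surjectivity is established.

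\textbf{Construction of $E$.} First I would fix a dense subset $\{y_\alpha : \alpha < \kappa\}$ of $Y'$ and initial preimages $\phi(y_\alpha) \in f^{-1}(y_\alpha) \cap X'$. Then I would enlarge iteratively: starting with $E_0 := \{\phi(y_\alpha) : \alpha < \kappa\}$ and given $E_n$, for each $x \in E_n$ and each $k \in \mathbb{N}$ the set $V_{x,k} := f(B_X(x, 1/k) \cap X')$ is open in $Y'$ (by openness of $f|_{X'}$) and therefore has density at most $\kappa$; pick a $\kappa$-dense subset $D_{x,k} \subset V_{x,k}$ and for each $z \in D_{x,k}$ a preimage in $B_X(x, 1/k) \cap X' \cap f^{-1}(z)$, adjoining all such preimages to $E_{n+1}$. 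Setting $E := \bigcup_n E_n$, one has $|E| \leq \kappa$ together with the refinement property that $f(E \cap B_X(x, 1/k))$ is dense in $V_{x,k}$ for every $x \in E$ and $k \in \mathbb{N}$.

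\textbf{Cauchy sequences.} To show $f(X_0) \supset Y'$, I would fix $y \in Y'$ and build inductively a Cauchy sequence $(x_n) \subset E$ with $f(x_n) \to y$; by completeness of $X$ and continuity of $f$, the limit $x$ lies in $X'$ with $f(x) = y$, hence in $X_0$. At step $n$, with $x_n \in E$ and $\delta_n := d_Y(f(x_n), y)$, I would pick a radius $r_n > 0$ such that $y \in V_{x_n, r_n}$, and then use the density of $f(E \cap B_X(x_n, r_n))$ in $V_{x_n, r_n}$ to find $x_{n+1} \in E \cap B_X(x_n, r_n)$ with $f(x_{n+1})$ arbitrarily close to $y$.

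\textbf{Main obstacle.} The delicate point is coordinating the radii so that $(x_n)$ is Cauchy in $X$, i.e., $\sum_n r_n < \infty$, while ensuring at each step that $y \in V_{x_n, r_n}$, i.e., $\delta_n < \rho(x_n, r_n)$, where $\rho(x, r)$ denotes the openness modulus (the largest $\rho$ with $B_{Y'}(f(x), \rho) \subset V_{x, r}$). Since $f$ is only pointwise open, $\rho(x, r)$ is not uniformly bounded below in $x$ and may degenerate faster than $r$ shrinks. I expect the resolution to exploit the freedom in making $\delta_{n+1}$ arbitrarily small (via density of $D_{x_n, k_n}$ near $y$) together with an adaptive choice of $r_n$ based on the current $x_n$; combining these into a single summable schedule is the main technical challenge, and may require enriching $E$ so that preimages are recorded at a range of scales adapted to the (non-uniform) openness modulus encountered along the iteration.
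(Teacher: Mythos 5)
You have correctly isolated the crux, but the proposal does not contain the idea that resolves it, so there is a genuine gap at the step you flag as the ``main obstacle.'' The difficulty is not merely technical bookkeeping of radii: with $E$ built as preimages of dense subsets (of $Y'$ and of the sets $V_{x,k}$), the union $\bigcup_{x\in E}V_{x,r}$ for a fixed summable radius schedule is just an open set containing the dense set $f(E)$, and it genuinely need not contain a given $y\in Y'$, because the openness modulus $\rho(x,r)$ at the chosen preimages can degenerate faster than $d(f(x),y)$ shrinks as $f(x)\to y$. So the induction can fail already at step $0$: there may be no $x_0\in E$ with $y\in V_{x_0,r_0}$ for any small $r_0$, and enlarging $E$ by more preimages of more dense sets does not cure this, since every enlargement of the same kind has the same defect. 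This is exactly the pitfall the paper singles out in a remark: one cannot get a cover of $\tilde Y$ from images of balls around preimages of a dense set.

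The missing idea is a stratification by \emph{quantitative} openness at a fixed scale. For $r>0$ set $\tilde X_i:=\{x: B(f(x),2^{-i}r)\subset f(B(x,r))\}$; openness of $f$ gives $\tilde X=\bigcup_i \tilde X_i$. Taking a dense subset of each image $f(\tilde X_i)$ \emph{separately}, with preimages chosen inside $\tilde X_i$, the balls $B(y_k^i,2^{-i}r)$ form a genuine open cover of $\tilde Y$ with each element contained in $f(B(x_k^i,r))$ and of diameter $\le r$. One then iterates this on each cover element (restricting $f$ to $f^{-1}(Y_k^i)\cap B(x_k^i,2^{-i})$, which is again open and surjective onto $Y_k^i$) with $r=2^{-i}$, producing nested covers of $Y'$. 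A given $y$ selects a nested chain of cover elements, and the associated marked preimages form a Cauchy sequence by construction --- the summability of the radii is built into the covers rather than negotiated against an unknown modulus along the iteration. Your closing suggestion of ``recording preimages at a range of scales adapted to the openness modulus'' is pointing at this, but without the stratification and the resulting covering property the Cauchy chase cannot be completed.
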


First of all, without loss of generality we may assume that the density character of $Y'$ is at least $\aleph_0$.

\begin{lemma}\label{lma:1:re}
 Let $\tilde X$ and $\tilde Y$ be metric spaces, $f \colon \tilde X \to \tilde Y$ a continuous open surjection and  $r > 0$.
Then there exists an open cover $\{\tilde Y_k\}_{k \in I}$ of $\tilde Y$, with  ${\rm card}(I) \le {\rm dens}(\tilde Y)$ such that for every $k  \in I$ we have $\diam(\tilde Y_k) \le r$  and there exists a point 
$\tilde x_k \in f^{-1}(\tilde Y_k)$ such that 
\begin{equation}\label{eq:cover}
\tilde Y_k \subset f(B(\tilde x_k,r)).
\end{equation}
Consequently, we also have that for each $k\in I$ we have that the map 
\begin{equation}\label{map}f:f^{-1}(\tilde Y_k) \cap B(\tilde x_k,r) 
\to \tilde Y_k
\end{equation}
is a continuous and open surjection.
\end{lemma}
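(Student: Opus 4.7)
The plan is straightforward: build a small neighborhood around every point of $\tilde Y$ using openness of $f$, then thin the resulting cover down to the required cardinality using that metric spaces have a base of cardinality equal to their density character.

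For the first step, for each $y \in \tilde Y$ use surjectivity to pick some $\tilde x_y \in f^{-1}(y)$, and set
\[
U_y := f(B(\tilde x_y, r)) \cap B(y, r/2).
\]
Because $f$ is open, $f(B(\tilde x_y, r))$ is open in $\tilde Y$, so $U_y$ is open. Moreover $y \in U_y$, $\diam(U_y) \le r$ (from the second factor), and $U_y \subset f(B(\tilde x_y, r))$, so \eqref{eq:cover} holds with this choice of $\tilde x_y$. Thus $\{U_y\}_{y \in \tilde Y}$ is an open cover of $\tilde Y$ meeting every conclusion of the lemma except the cardinality bound.

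To reduce cardinality, take a dense $D \subset \tilde Y$ with ${\rm card}(D) = {\rm dens}(\tilde Y)$; then $\mathcal{B} := \{B(d, q) : d \in D,\, q \in \mathbb{Q}_+\}$ is a base of $\tilde Y$ of cardinality ${\rm dens}(\tilde Y)$. By the standard refinement argument one extracts from $\{U_y\}_{y \in \tilde Y}$ a subcover of cardinality at most $|\mathcal{B}| = {\rm dens}(\tilde Y)$: for each $B \in \mathcal{B}$ that lies in some $U_y$, pick one such $y$; the resulting subfamily still covers $\tilde Y$, since every point of $\tilde Y$ has a basis neighborhood inside some $U_y$. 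Reindex this subcover as $\{\tilde Y_k\}_{k \in I}$ with the corresponding chosen preimages $\tilde x_k$.

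For the consequence, the restricted map in \eqref{map} is continuous as a restriction and is surjective because $\tilde Y_k \subset f(B(\tilde x_k, r))$ (any $y \in \tilde Y_k$ is $f(x)$ for some $x \in B(\tilde x_k, r)$, and that $x$ automatically lies in $f^{-1}(\tilde Y_k)$). For openness, any relatively open subset of $f^{-1}(\tilde Y_k) \cap B(\tilde x_k, r)$ has the form $W \cap f^{-1}(\tilde Y_k) \cap B(\tilde x_k, r)$ with $W$ open in $\tilde X$, and its image under $f$ equals $f(W \cap B(\tilde x_k, r)) \cap \tilde Y_k$, which is open in $\tilde Y_k$ by openness of $f$ on $\tilde X$. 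No step of this appears hard; the only non-definitional input is the standard fact that a metric space has base cardinality equal to its density character.
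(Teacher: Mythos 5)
Your proof is correct, but it takes a genuinely different route from the paper's. The paper first stratifies the domain into the sets $\tilde X_i = \{\tilde x : B(f(\tilde x),2^{-i}r) \subset f(B(\tilde x,r))\}$, on each of which $f$ is uniformly open at scale $r$, and then covers each image $f(\tilde X_i)$ by balls of radius $2^{-i}r$ centred at a dense subset; the point of this stratification (emphasised in the remark following the lemma) is that one cannot start from a dense subset of $\tilde Y$ and expect the images of balls around chosen preimages to cover $\tilde Y$. You sidestep that obstacle differently: you build a cover indexed by \emph{all} of $\tilde Y$, which trivially covers $\tilde Y$ and trivially satisfies \eqref{eq:cover} and the diameter bound, and only afterwards cut it down to size using the fact that a metric space has a base of cardinality equal to its density character, so every open cover admits a subcover of that cardinality. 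Both arguments are complete. Yours is shorter and leans on a standard piece of general topology (weight equals density for metric spaces, hence the generalized Lindel\"of property); the paper's is more self-contained and produces genuine balls $B(\tilde y_k^i,2^{-i}r)$ as cover elements rather than sets of the form $f(B(\tilde x_y,r)) \cap B(y,r/2)$, though nothing in the application requires the cover elements to be balls. One pedantic point: ${\rm card}(\mathcal{B}) = {\rm card}(D)\cdot\aleph_0$ equals ${\rm dens}(\tilde Y)$ only when the latter is infinite; the finite case is trivial (and is excluded anyway by the reduction stated just before the lemma), but it deserves a word. Your verification of the ``consequently'' clause is also correct and essentially matches the paper's.
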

\begin{proof}
For each $i \in \N$ define
\[
\tilde X_i := \{\tilde x \in \tilde X \,:\, B(f(\tilde x),2^{-i}r) \subset f(B(\tilde x,r))\}.
\]
Since $f$ is open, we have $\tilde X = \bigcup_{i \in \N} \tilde X_i$. It suffices to cover the set
\[
\tilde Y^i := f(\tilde X_i)
\]
for each $i \in \N$. Let $D^i = \{\tilde y_{k}^i\}_{k \in I_i}$ be dense set in $\tilde Y^i$ with 
${\rm card}(I_i) \le {\rm dens}(\tilde Y)$. For each $k \in I_i$ select a point $\tilde x_k^i \in f^{-1}(\{\tilde y_{k}^i\}) \cap  \tilde X_i$ and define 
\[
\tilde Y_k^i = B(\tilde y_{k}^i,2^{-i}r).
\]
By construction, $\tilde Y_k^i$ are open, $\tilde Y^i = \bigcup_{k \in I_i}\tilde Y_k^i$, $\diam(\tilde Y_{k}^i) \le 2^{1-i}r \le r$, $\tilde x_k^i \in f^{-1}(\tilde Y_{k}^i)$ and $\tilde Y_{k}^i \subset f(B(\tilde x_k^i,r))$ as was required.
From \eqref{eq:cover}, the map in \eqref{map} is surjective. Moreover, it is continuous and open since it is the restriction to an open set of a continuous and open map. 
\end{proof}

\begin{rem}
 Notice that in the proof of Lemma \ref{lma:1:re} one cannot directly use the openness of $f$ to open neighborhoods of preimages of a dense set of points in $\tilde Y$ since the images of such neighborhoods need not cover the space $\tilde Y$. This is why the space $\tilde X$ is first covered by sets $\tilde X_i$ where the map is uniformly open on the given scale $r$.
\end{rem}

\begin{proof}[{\bf Proof of Theorem \ref{thm:main:extended}}]
 We prove the claim by repeatedly using Lemma \ref{lma:1:re}. First, we let $r = 1$, $\tilde X = X'$ and $\tilde Y = Y'$. Lemma \ref{lma:1:re} gives us an open cover 
 $\{Y_k^0\}_{k \in I_0}$ of $Y'$ and points $\{x_k^0\}_{k \in I_0}$.
 
 Then we continue inductively. Suppose we have defined for fixed $i$ an open cover $\{Y_k^i\}_{k \in I_i}$ of $Y'$ and corresponding points $\{x_k^{i}\}_{k \in I_{i}}$. We continue to cover each $Y_k^i$ with the help of Lemma \ref{lma:1:re} by taking $r = 2^{-i}$, $\tilde Y = Y_k^i$
and $\tilde X = f^{-1}(Y_k^i) \cap B(x_k^i,2^{-i})$.
Lemma \ref{lma:1:re} now gives us a cover $\{Y_{k,j}^i\}_{j \in J_{k,i}}$ of $Y_k^i$ by sets $Y_{k,j}^i \subset Y_k^i$. We collect all the covers of $Y_k^i$, for $k \in I_i$, and write the collection as
\[
\{Y_k^{i+1}\}_{k \in I_{i+1}} = \bigcup_{k \in I_i}\bigcup_{j \in J_{k,i}}\{Y_{k,j}^i\},
\]
and similarly we collect all the corresponding points $\{x_k^{i+1}\}_{k \in I_{i+1}}$ for which \eqref{eq:cover} holds.

Now we define $X_0$ as the closure in $X'$ of
\[
 D = \bigcup_{i=0}^\infty\bigcup_{k \in I_i} \{x_k^i\}.
\]
Since for each $i \in \N$ we have ${\rm card}(I_i) \le {\rm dens}(Y')$, we have ${\rm card}(D) \le {\rm dens}(Y')$. Thus ${\rm dens}(X_0) \le {\rm dens}(Y')$. 

We still need to verify that $f(X_0) = Y'$. 
For this, take $y \in Y'$.
For each $i \in \N$ we select an index $k_i \in I_i$ inductively.
Since $\{Y_k^0\}_{k \in I_0}$ covers $Y'$, there exists $k_0 \in I_0$ such that $y \in Y_{k_0}^0$. 
Suppose now that $y \in Y_{k_i}^i$ for $i \ge 0$. Then by construction of the collection $\{Y_k^{i+1}\}_{k \in I_{i+1}}$, there exists an index $k_{i+1} \in I_{i+1}$ such that $y \in Y_{k_i}^i$ and  $Y_{k_{i+1}}^{i+1} \subset Y_{k_i}^i$.
Thus, by writing $y_i := f(x_{k_i}^i) \in Y_{k_i}^i$ we obtain a sequence $(y_i)_{i \in \N}$ such that $d(y_i,y)\le 2^{-i}$. Moreover, by construction
$x_{k_{i+1}}^{i+1} \in B(x_{k_{i}}^{i},2^{-i})$ for all $i \in \N$. Therefore, $(x_{k_i}^i)_{i\in\N}$ is a Cauchy sequence.
Since $X$ is complete, there exists $ x \in X$ such that $x_{k_i}^i \to x $ as $i \to \infty$. By continuity of $f$ we have $f(x) = y$ and thus $x \in X'$.
\end{proof}

\end{document}